\theoremstyle{plain}
\newtheorem{Thm}{Theorem}[section]
\newtheorem{Prop}[Thm]{Proposition}
\newtheorem{Cor}[Thm]{Corollary}
\theoremstyle{definition}
\newtheorem{Def}[Thm]{Definition}
\newtheorem{Ex}[Thm]{Example}
\theoremstyle{remark}
\newcommand{\B}{{\mathcal L}}
\newcommand{\C}{\mathbb{C}}
\newcommand{\cC}{{\mathcal C}}
\newcommand{\bbD}{\mathbb{D}}
\newcommand{\G}{{\mathcal G}}
\newcommand{\cH}{{\mathcal H}}
\newcommand{\cK}{{\mathcal K}}
\newcommand{\bbF}{\mathbb{F}}
\newcommand{\R}{\mathbb{R}}
\newcommand{\T}{\mathbb{T}}
\newcommand{\cT}{{\mathcal T}}
\newcommand{\Hom}{{\mathrm {Hom}}}
\newcommand{\ua}{\uparrow}
\newcommand{\XYZ}{X^{Z\ua Y}}
\begin{document}
\title[Homotopy Lifting Theorem]{The Homotopy Lifting Theorem for Semiprojective C*-Algebras}
\author{Bruce Blackadar}
\address{Department of Mathematics/0084 \\ University of Nevada, Reno \\ Reno, NV 89557, USA}
\email{bruceb@unr.edu}

\date{\today}

\maketitle
\begin{abstract}
We prove a complete analog of the Borsuk Homotopy Extension Theorem for arbitrary
semiprojective C*-algebras.  We also obtain some other results about
semiprojective C*-algebras: a partial lifting theorem with specified quotient,
a lifting result for homomorphisms close to a liftable homomorphism, and that sufficiently close homomorphisms
from a semiprojective C*-algebra are homotopic.

\end{abstract}

\section{Introduction}

It seems obligatory in any exposition of the theory of Absolute Neighborhood Retracts (ANR's) in topology to refer to the
Borsuk Homotopy Extension Theorem as ``one of the most important results in the theory of ANR's'' (as well it is).

\begin{Thm} {\sc [Borsuk Homotopy Extension Theorem]} \cite{BorsukProlongements}, \cite[8.1]{BorsukRetracts}\label{BorsukHom}
Let $X$ be an ANR, $Y$ a compact metrizable space, $Z$ a closed subspace of $Y$, $(\phi_t)$ ($0\leq t\leq 1$) a uniformly continuous path of
continuous functions from $Z$ to $X$ (i.e.\ $h(t,z)=\phi_t(z)$ is a homotopy from $\phi_0$ to $\phi_1$).  Suppose $\phi_0$
extends to a continuous function $\bar\phi_0$ from $Y$ to $X$.  Then there is a uniformly continuous path $\bar\phi_t$
of extensions of the $\phi_t$ to functions from $Y$ to $X$ (i.e.\ $\bar h(t,y)=\bar\phi_t(y)$ is a homotopy from $\bar\phi_0$
to $\bar\phi_1$).
\end{Thm}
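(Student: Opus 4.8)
The plan is to reduce the theorem to the defining neighborhood-extension property of an ANR and then to ``compress'' the product $Y \times I$ (where $I = [0,1]$) into the neighborhood on which the extension is defined. First I would assemble the given data into a single continuous map. Set $A = (Y \times \{0\}) \cup (Z \times I)$, a closed subspace of the compact metrizable space $Y \times I$, and define $F \colon A \to X$ by $F(y,0) = \bar\phi_0(y)$ and $F(z,t) = \phi_t(z) = h(t,z)$; these agree on the overlap $Z \times \{0\}$ because $\phi_0 = \bar\phi_0|_Z$, so $F$ is continuous by the gluing lemma for two closed sets. Since $X$ is an ANR and $Y \times I$ is metrizable, $F$ extends to a continuous map $\tilde F \colon V \to X$ for some open set $V$ with $A \subseteq V \subseteq Y \times I$.

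Next I would use the compactness of $Y$ to produce a continuous function $u \colon Y \to [0,1]$ that is identically $1$ on $Z$ and whose ``subgraph'' $R = \{(y,t) : 0 \le t \le u(y)\}$ lies inside $V$. Because $Y \times \{0\}$ is compact and contained in $V$, the tube lemma gives $\varepsilon > 0$ with $Y \times [0,\varepsilon] \subseteq V$; because $Z \times I$ is compact and contained in $V$, the tube lemma gives an open $N \supseteq Z$ with $N \times I \subseteq V$. Applying Urysohn's lemma to the disjoint closed sets $Z$ and $Y \setminus N$ then yields a continuous $u \colon Y \to [\varepsilon,1]$ with $u \equiv 1$ on $Z$ and $u \equiv \varepsilon$ on $Y \setminus N$. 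If $(y,t) \in R$ then either $t \le \varepsilon$, so $(y,t) \in Y \times [0,\varepsilon] \subseteq V$, or $t > \varepsilon$, forcing $u(y) > \varepsilon$, hence $y \in N$ and $(y,t) \in N \times I \subseteq V$; thus $R \subseteq V$.

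Finally I would define the retraction-like map $r \colon Y \times I \to R$ by $r(y,t) = (y, \min(t, u(y)))$, which is continuous and fixes $R$ pointwise, and set $\bar h(t,y) = \tilde F(r(y,t))$ and $\bar\phi_t = \bar h(t,\cdot)$. Then $\bar h(0,y) = \tilde F(y,0) = \bar\phi_0(y)$, so $\bar\phi_0$ is recovered; for $z \in Z$ we have $u(z) = 1$, so $\bar h(t,z) = \tilde F(z,t) = \phi_t(z)$, so each $\bar\phi_t$ extends $\phi_t$; and $\bar h$ is continuous on the compact space $I \times Y$, hence automatically uniformly continuous and a genuine homotopy from $\bar\phi_0$ to $\bar\phi_1$. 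I do not expect a serious obstacle, since this is essentially the classical Borsuk argument; the step demanding the most care is the construction of $u$ together with the verification that the compressed region $R$ stays inside $V$, which is exactly where the compactness of $Y$ enters (via the tube lemma) and is what makes the formula $r(y,t) = (y,\min(t,u(y)))$ have the correct boundary behaviour on $A$.
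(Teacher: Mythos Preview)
Your argument is correct and is precisely the classical Borsuk proof: form the partial map on $(Y\times\{0\})\cup(Z\times I)$, extend it over a neighborhood $V$ using the ANR property, and then retract $Y\times I$ into $V$ via a Urysohn function. The tube-lemma step and the verification that $R\subseteq V$ are handled correctly.

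There is nothing to compare against in the paper itself, however: Theorem~\ref{BorsukHom} is stated only as background, with a citation to Borsuk's original work, and no proof is given here. The paper's contribution is the noncommutative analog (Theorem~\ref{HomExt}), and the author explicitly remarks that the Borsuk argument you have reproduced \emph{does not} carry over to C*-algebras, because it implicitly uses that closed sets in a metrizable space are $G_\delta$'s. So your proof is fine for the commutative statement as asked, but be aware that the paper's own homotopy-lifting theorem is proved by an entirely different mechanism (the Specified Quotient Partial Lifting Theorem~\ref{SpecLift} and the close-homomorphism lifting result~\ref{CloseExtend}), not by dualizing the construction you gave.
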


In particular, any function from $Z$ to $X$ homotopic to an extendible function is extendible.  The theorem also works for
metrizable spaces which are not necessarily compact when phrased in the homotopy language; we have stated it in the version
which can potentially be extended to noncommutative C*-algebras.  The theorem can be regarded as giving a ``universal cofibration property''
for maps into ANR's.

\smallskip

There is a direct analog of (compact) ANR's in the category of (separable) noncommutative C*-algebras: the semiprojective C*-algebras
(\cite{BlackadarShape}, \cite[II.8.3.7]{BlackadarOperator}).  Many results about ANR's carry through to semiprojective C*-algebras
with essentially identical proofs (just ``turning arrows around'').  However, Borsuk's proof of the Homotopy Extension Theorem
is not one of these: the proof simply does not work in the noncommutative case.  The underlying reason is that in a metrizable
space, every closed set is a $G_\delta$, but this is false in the primitive ideal space of a separable noncommutative C*-algebra in general.

We can, however, by a different argument obtain a complete analog of the Borsuk Homotopy Extension Theorem for arbitrary
semiprojective C*-algebras (\ref{HomExt}).  In the course of the proof we obtain some other results about
semiprojective C*-algebras which are of interest: a partial lifting theorem with specified quotient (\ref{SpecLift}),
a lifting result for homomorphisms close to a liftable homomorphism (\ref{CloseExtend}), and that sufficiently close homomorphisms
from a semiprojective C*-algebra are homotopic (\ref{CloseHom}).



\section{The General Chinese Remainder Theorem}

We will make use of a general ``folklore'' result from ring theory, which can be called the Generalized Chinese Remainder Theorem.
Although this result should probably be one of the standard isomorphism theorems for rings, it is not covered in most algebra texts,
so we give the simple proof.  A variant can be found in \cite[Prop.\ 3.1]{PedersenPullback}, with the same proof.

\begin{Prop}\label{Chinese}
Let $R$ be a ring, and $I$ and $J$ (two-sided) ideals in $R$.  Then the map $\phi:a\mapsto(a\mod I,a\mod J)$ gives an
isomorphism from $R/(I\cap J)$ onto the fibered product
$$P=(R/I)\oplus_{(\pi_1,\pi_2)}(R/J)=\{(x,y) : x\in R/I, y\in R/J, \pi_1(x)=\pi_2(y)\}\subseteq(R/I)\oplus(R/J)$$
where $\pi_1:R/I\to R/(I+J)$ and $\pi_2:R/J\to R/(I+J)$ are the quotient maps.  ($P$ is the pullback of $(\pi_1,\pi_2)$.)
\end{Prop}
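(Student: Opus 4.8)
The plan is to realize $\phi$ as the map induced on $R/(I\cap J)$ by the obvious ring homomorphism into the direct sum, to identify its kernel and its image, and then to establish surjectivity by a one-line element chase. First I would consider $\psi\colon R\to (R/I)\oplus(R/J)$, $\psi(a)=(a\bmod I,\,a\bmod J)$, which is plainly a ring homomorphism. Its kernel is $\{a:a\in I\ \text{and}\ a\in J\}=I\cap J$, so by the first isomorphism theorem $\psi$ factors as an injective ring homomorphism $R/(I\cap J)\hookrightarrow(R/I)\oplus(R/J)$, and this is exactly $\phi$. Moreover the image of $\psi$ lies in $P$: for any $a\in R$ we have $\pi_1(a\bmod I)=a\bmod(I+J)=\pi_2(a\bmod J)$, since both $\pi_1\circ(\text{quotient by }I)$ and $\pi_2\circ(\text{quotient by }J)$ are just the quotient map $R\to R/(I+J)$. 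Thus $\phi\colon R/(I\cap J)\to P$ is a well-defined injective ring homomorphism.

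It then remains to prove that $\phi$ maps onto $P$, which is the only point carrying any content. Given $(x,y)\in P$, I would choose $a,b\in R$ with $a\bmod I=x$ and $b\bmod J=y$. The defining condition $\pi_1(x)=\pi_2(y)$ says precisely that $a-b\in I+J$, so one can write $a-b=i+j$ with $i\in I$, $j\in J$, and set $c:=a-i=b+j$. Then $c\bmod I=a\bmod I=x$ and $c\bmod J=b\bmod J=y$, so $\phi\bigl(c\bmod(I\cap J)\bigr)=(x,y)$, establishing surjectivity.

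The main (and essentially only) obstacle is this surjectivity step, and even there the whole point is the elementary fact that $I+J$ is again a two-sided ideal, so that the congruence $a\equiv b$ modulo $I+J$ can be witnessed by an element of $I$ plus an element of $J$; everything else is a formal consequence of the first isomorphism theorem. Since no limits or topology enter the argument, I expect it to carry over verbatim to C*-algebras, with ``ring homomorphism'' replaced by ``$*$-homomorphism'' and $\psi$ automatically contractive, which is the form in which the result will be used in the sequel.
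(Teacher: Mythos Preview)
Your proof is correct and follows essentially the same route as the paper: identify the kernel as $I\cap J$ and then verify surjectivity onto $P$ by an element chase using a decomposition $a-b=i+j$ with $i\in I$, $j\in J$. The paper's chase is arranged slightly asymmetrically (lift $y$ first, then observe $x-\pi_I(b)\in\pi_I(J)$ and correct by an element of $J$), but this is the same computation.
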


\begin{proof}
It is obvious that $\phi$ (regarded as a map from $R$ to $P$) is a homomorphism with kernel $I\cap J$.  We need only show that $\phi$ is surjective.
Let $(x,y)\in P$.  Write $\pi_I$ and $\pi_J$ for the quotient maps from $R$ to $R/I$ and $R/J$ respectively.
Then there is a $b\in R$ with $\pi_J(b)=y$.  We have
$$\pi_1(x-\pi_I(b))=\pi_1(x)-\pi_1(\pi_I(b))=\pi_1(x)-\pi_2(y)=0$$
and the kernel of $\pi_1$ is exactly $\pi_I(J)$,
so there is a $c\in J$ with $\pi_I(c)=x-\pi_I(b)$.  Set $a=b+c$.  Then $\pi_I(a)=x$ and $\pi_J(a)=\pi_J(b)+\pi_J(c)=y$.
Thus $\phi(a)=(x,y)$.
\end{proof}

In particular, to define a homomorphism from another ring into $R/(I\cap J)$, it suffices to give a compatible pair of
homomorphisms into $R/I$ and $R/J$.

\smallskip

There is, of course, a version of this result for finitely many ideals, but it is somewhat complicated to state.  The
usual Chinese Remainder Theorem is the special case where $I+J=R$; the fibered product is then just the full direct sum.

To apply this result to C*-algebras, note that if $I$ and $J$ are closed ideals in a C*-algebra, then $I+J$ is
also closed (see e.g.\ \cite[II.5.1.3]{BlackadarOperator}).  One can replace ``homomorphism'' with ``*-homomorphism'' throughout.
(When working with C*-algebras, we will take ``homomorphism'' to mean ``*-homomorphism.'')

\section{Partial Liftings with Specified Quotient}

Recall the definition of semiprojectivity (\cite{BlackadarShape}, \cite[II.8.3.7]{BlackadarOperator}):
A separable C*-algebra $A$ is {\em semiprojective} if, whenever $B$ is a C*-algebra, $(J_n)$ an increasing sequence of closed (two-sided) ideals of $B$, and $J=[\cup J_n]^-$, then any homomorphism $\phi:A\to B/J$ can be partially lifted to a homomorphism
$\psi:A\to B/J_n$ for some sufficiently large $n$.
But suppose in the above situation, with $A$ semiprojective, we also have another closed ideal $I$ of $B$ and a homomorphism $\tilde \phi$ from $A$ to $B/I$ such that
$\phi$ and $\tilde\phi$ agree mod $I+J$.  Can we partially lift $\phi$ to $\psi$ so that $\psi$ agrees with $\tilde\phi$
mod $I+J_n$?  The next result shows that this is always possible.  For any closed ideal $K$ of $B$, write $\pi_K$
for the quotient map to $B/K$ (by slight abuse of notation, this same symbol will be used for the quotient map from $B/L$ to $B/K$
for any closed ideal $L$ contained in $K$.)

\begin{Thm} {\sc [Specified Quotient Partial Lifting Theorem]}\label{SpecLift}
Let $A$ be a semiprojective C*-algebra, $B$ a C*-algebra, $(J_n)$ an increasing sequence of closed ideals of $B$
with $J=[\cup J_n]^-$, $I$ another closed ideal of $B$, and $\phi:A\to B/J$ and $\tilde\phi:A\to B/I$ *-homomorphisms
with $\pi_{I+J}\circ\phi=\pi_{I+J}\circ\tilde\phi$.
Then for some sufficiently large $n$ there is a *-homomorphism $\psi:A\to B/J_n$ such that $\pi_J\circ\psi=\phi$ and
$\pi_{I+J_n}\circ\psi=\pi_{I+J_n}\circ\tilde\phi$.
\end{Thm}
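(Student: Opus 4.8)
The plan is to reduce the statement to the ordinary definition of semiprojectivity applied to a well-chosen quotient of $B$, using the Generalized Chinese Remainder Theorem (Proposition \ref{Chinese}) to carry along the compatibility with $\tilde\phi$. Throughout I would keep in mind that $I+J$ is closed (hence so is each $I+J_n$), and that $I\cap J_n\subseteq I\cap J\subseteq J$ for every $n$, so that all the quotient maps below are defined.

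The one non-formal ingredient, and the step I expect to be the main obstacle, is the observation that $(I\cap J_n)$ is an increasing sequence of closed ideals of $B$ with $[\cup_n(I\cap J_n)]^- = I\cap J$. The inclusion $\subseteq$ is immediate. For $\supseteq$, I would take $x\in I\cap J$ and an approximate unit $(e_\lambda)$ for $I$; writing $x$ as a limit of elements $z_k\in\cup_n J_n$, each $e_\lambda z_k$ lies in $I\cap J_{n_k}$ for the appropriate $n_k$ (it is in $I$ since $e_\lambda\in I$, and in $J_{n_k}$ since $J_{n_k}$ is an ideal), so $e_\lambda x=\lim_k e_\lambda z_k\in[\cup_n(I\cap J_n)]^-$, and then $x=\lim_\lambda e_\lambda x$ lies in the same closed ideal. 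The point of this little lemma is precisely that it puts us in the setting in which semiprojectivity of $A$ can be invoked for the ideals $I\cap J_n$.

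Next I would glue $\phi$ and $\tilde\phi$ together. By hypothesis $\pi_{I+J}\circ\phi=\pi_{I+J}\circ\tilde\phi$, so Proposition \ref{Chinese} (in the pullback form: to map into $B/(I\cap J)$ it suffices to give compatible maps into $B/I$ and $B/J$) produces a $*$-homomorphism $\Phi:A\to B/(I\cap J)$ with $\pi_J\circ\Phi=\phi$ and $\pi_I\circ\Phi=\tilde\phi$. Applying the definition of semiprojectivity of $A$ to $\Phi$, using the increasing sequence $(I\cap J_n)$ with closed union $I\cap J$ from the previous step, yields, for all sufficiently large $n$, a $*$-homomorphism $\Psi:A\to B/(I\cap J_n)$ with $\pi_{I\cap J}\circ\Psi=\Phi$.

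Finally I would set $\psi=\pi_{J_n}\circ\Psi:A\to B/J_n$ and verify the two required identities by composing quotient maps. Since $\pi_J\circ\pi_{J_n}=\pi_J$ and $\pi_J\circ\pi_{I\cap J}=\pi_J$ and $\pi_I\circ\pi_{I\cap J}=\pi_I$ as maps out of $B/(I\cap J_n)$, one gets $\pi_J\circ\psi=\pi_J\circ\Psi=\pi_J\circ\Phi=\phi$ and $\pi_I\circ\Psi=\pi_I\circ\Phi=\tilde\phi$; then, because $\pi_{I+J_n}$ factors through both $B/J_n$ and $B/I$, it follows that $\pi_{I+J_n}\circ\psi=\pi_{I+J_n}\circ\pi_{J_n}\circ\Psi=\pi_{I+J_n}\circ\pi_I\circ\Psi=\pi_{I+J_n}\circ\tilde\phi$. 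So beyond the approximate-unit computation in the second paragraph the argument is pure bookkeeping of ideals together with a single appeal to semiprojectivity.
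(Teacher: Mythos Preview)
Your proposal is correct and follows essentially the same route as the paper: glue $\phi$ and $\tilde\phi$ into a single map $A\to B/(I\cap J)$ via Proposition~\ref{Chinese}, apply semiprojectivity along the chain $(I\cap J_n)$, and read off $\psi$. The only difference is in the proof that $[\cup_n(I\cap J_n)]^-=I\cap J$: the paper invokes the isometry $I/(I\cap J_n)\cong(I+J_n)/J_n$ from \cite[II.5.1.3]{BlackadarOperator} to equate the two infima directly, while you use an approximate-unit argument; both are valid and the rest is identical bookkeeping.
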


Pictorially, we have the following diagram which can be made to commute:

\[
\xymatrix @=1pc @*[c] {
 & \  & \  & B \ar[dd] \ar[rd] &
\\ & & & \ar[r] & B/I \ar[dd]
\\ & & & B/J_1 \ar[dd] \ar[rd] &
\\ & & & & B/(I+J_1) \ar[dd]
\\ & & & B/J_2 \ar[dd] \ar[rd] &
\\ & & & & B/(I+J_2) \ar[dd]
\\ & & & \vdots \ar[dd] &
\\ & & & & \vdots \ar[dd]
\\ & & & B/J_n \ar[dd] \ar[rd] &
\\ & & & & B/(I+J_n) \ar[dd]
\\ & & & \vdots \ar[dd] &
\\ & & & & \vdots \ar[dd]
\\ A \ar@(u,l)@{-}^{\tilde\phi}[rrruuuuuuuuuuu] \ar@{.>}_{\psi}[rrruuuu] \ar^{\phi}[rrr] & & & B/J \ar[rd] &
\\ & & & & B/(I+J) }\]

\begin{proof}
It is obvious that $\cup_n(I+J_n)$ is dense in $I+J$.  It is not obvious that $\cup_n(I\cap J_n)$ is dense in $I\cap J$,
but this can be proved using \cite[II.5.1.3]{BlackadarOperator}: if $x\in I\cap J$, then
$$0=\inf_n [\inf_{y\in J_n}\|x-y\| ]=\inf_n [\inf_{z\in I\cap J_n}\|x-z\| ]\ .$$
By \ref{Chinese} $\phi$ and $\tilde\phi$ define a homomorphism $\bar\phi$ from $A$ to $B/(I\cap J)$, which
partially lifts to a homomorphism $\bar\psi$ from $A$ to $B/(I\cap J_n)$ for some $n$ by semiprojectivity.  The map $\bar\psi$
defines compatible homomorphisms $\psi:A\to B/J_n$ and $\tilde\psi:A\to B/I$.   Then 
$$\tilde\psi=\pi_I\circ\psi:A\to B/(I\cap J_n)\to B/I$$
$$=\pi_I\circ\pi_{I\cap J}\circ\psi:A\to B/(I\cap J_n)\to B/(I\cap J)\to B/I$$
$$=\pi_I\circ\phi:A\to B/(I\cap J)\to B/I=\tilde\phi\ .$$
Since
$$\pi_{I+J_n}\circ\psi:A\to B/J_n\to B/(I+J_n)$$
equals
$$\pi_{I+J_n}\circ\tilde\psi=\pi_{I+J_n}\circ\tilde\phi:A\to B/I\to B/(I+J_n)$$
we have that $\psi$ is the desired partial lift of $\phi$.
\end{proof}

\section{Lifting Close Homomorphisms}

If $A$ and $B$ are C*-algebras and $I$ is a closed ideal of $B$, then a homomorphism from $A$ to $B/I$ need not lift
in general to a homomorphism from $A$ to $B$, even if $A$ is semiprojective.  But suppose $\phi:A\to B/I$ does lift to $\bar\phi:A\to B$, and $\psi$ is
another homomorphism from $A$ to $B/I$ which is close to $\phi$ in the point-norm topology.  If $A$ is semiprojective, does $\psi$ also lift to $B$,
and can the lift be chosen close to $\bar\phi$ in the point-norm topology?  The answer is yes in the commutative category
\cite[3.1]{BorsukRetracts}, but the commutative proof does not generalize to the noncommutative case.  However, we can by a
different argument obtain the same result for general semiprojective C*-algebras.

\begin{Thm}\label{CloseExtend}
Let $A$ be a semiprojective C*-algebra generated by a finite or countable set $\G=\{x_1,x_2,\dots\}$ with $\lim_{j\to\infty}\|x_j\|=0$ if
$\G$ is infinite.   Then for any $\epsilon>0$ there is a $\delta>0$
such that, whenever $B$ is a C*-algebra, $I$ a closed ideal of $B$, $\phi$ and $\psi$ *-homomorphisms from $A$ to $B/I$
with $\|\phi(x_j)-\psi(x_j)\|<\delta$ for all $j$ and such that $\phi$ lifts to a *-homomorphism $\bar\phi:A\to B$ (i.e.\
$\pi_I\circ\bar\phi=\phi$), then $\psi$ also lifts to a *-homomorphism $\bar\psi:A\to B$ with $\|\bar\psi(x_j)-\bar\phi(x_j)\|<\epsilon$
for all $j$.  (The $\delta$ depends on $\epsilon$, $A$, and the set $\G$ of generators, but not on the $B$, $I$, $\phi$, $\psi$.)
\end{Thm}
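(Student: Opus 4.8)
The plan is to argue by contradiction, exactly parallel to the proof of \ref{IdStableRel}, using the Specified Quotient Partial Lifting Theorem \ref{SpecLift} applied to a product algebra. Suppose the statement fails for some $\epsilon>0$. Then for each $n$ there are a C*-algebra $B_n$, a closed ideal $I_n$ of $B_n$, *-homomorphisms $\phi_n,\psi_n:A\to B_n/I_n$ with $\|\phi_n(x_j)-\psi_n(x_j)\|<\frac1n$ for all $j$, and a lift $\bar\phi_n:A\to B_n$ of $\phi_n$, such that no lift $\bar\psi_n:A\to B_n$ of $\psi_n$ satisfies $\|\bar\psi_n(x_j)-\bar\phi_n(x_j)\|<\epsilon$ for all $j$.

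Form $B=\prod_n B_n$, $I=\prod_n I_n$, let $J_n$ be the ideal of elements of $B$ vanishing in every coordinate after the $n$-th, and $J=[\cup_n J_n]^-=\oplus_n B_n$. The routine identifications are $B/J_n\cong\prod_{k>n}B_k$, $B/(I+J_n)\cong\prod_{k>n}(B_k/I_k)$, $B/I\cong\prod_n(B_n/I_n)$, and $(I+J)/I\cong\oplus_n(B_n/I_n)$ (the $c_0$-sum sitting inside the product). Put $\bar\phi=(\bar\phi_n)_n:A\to B$, $\phi:=\pi_J\circ\bar\phi:A\to B/J$, and $\tilde\phi:=(\psi_n)_n:A\to B/I$. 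Because $\|\phi_n(x_j)-\psi_n(x_j)\|\to0$ as $n\to\infty$ for each fixed $j$, the element $\pi_I(\bar\phi(x_j))-\tilde\phi(x_j)$ lies in $(I+J)/I$ for every $j$; hence $\pi_{I+J}\circ\phi$ and $\pi_{I+J}\circ\tilde\phi$ agree on the generating set $\G$ and therefore coincide.

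Now \ref{SpecLift} yields, for some $n$, a *-homomorphism $\theta:A\to B/J_n$ with $\pi_J\circ\theta=\phi$ and $\pi_{I+J_n}\circ\theta=\pi_{I+J_n}\circ\tilde\phi$. Writing $\theta$ as a family $(\rho_k)_{k>n}$ of *-homomorphisms $\rho_k:A\to B_k$ under the identification above, the second equation says $\pi_{I_k}\circ\rho_k=\psi_k$ for all $k>n$, i.e.\ each $\rho_k$ is a lift of $\psi_k$, while the first says $\|\rho_k(a)-\bar\phi_k(a)\|\to0$ as $k\to\infty$ for every $a\in A$. In particular this holds for $a=x_j$; for the finitely many $j$ with $\|x_j\|\geq\epsilon/2$ we may then pick $k>n$ so large that $\|\rho_k(x_j)-\bar\phi_k(x_j)\|<\epsilon$ for all such $j$, and for every other $j$ we automatically have $\|\rho_k(x_j)-\bar\phi_k(x_j)\|\leq 2\|x_j\|<\epsilon$. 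For this $k$ the *-homomorphism $\rho_k$ lifts $\psi_k$ and is within $\epsilon$ of $\bar\phi_k$ on every generator, contradicting the choice of $B_k,I_k,\phi_k,\psi_k,\bar\phi_k$.

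This is almost pure bookkeeping once \ref{SpecLift} is available; the single place the hypothesis $\lim_{j\to\infty}\|x_j\|=0$ (for infinite $\G$) is used is to reduce both the hypothesis $\|\phi(x_j)-\psi(x_j)\|<\delta$ and the desired conclusion $\|\bar\psi(x_j)-\bar\phi(x_j)\|<\epsilon$ to conditions on finitely many generators, so that one $\delta$---and one coordinate $k$ in the argument above---suffices. The only genuinely delicate point is matching the ideals of the product algebra with the quotients $B_k/I_k$ so that the two conclusions of \ref{SpecLift} translate precisely into ``$\rho_k$ lifts $\psi_k$'' and ``$\rho_k$ is asymptotically close to $\bar\phi_k$.''
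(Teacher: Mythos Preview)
Your proof is correct and follows essentially the same approach as the paper's: contradiction, product construction $B=\prod_n B_n$ with the ideals $I=\prod_n I_n$ and $J_n$, application of Theorem~\ref{SpecLift}, and the finite/tail split on the generators using $\|x_j\|\to 0$. The only cosmetic difference is that you verify $\pi_{I+J}\circ\phi=\pi_{I+J}\circ\tilde\phi$ by checking equality on the generating set $\G$, whereas the paper shows $\lim_n\|\phi_n(x)-\psi_n(x)\|=0$ for all $x\in A$ via a density argument; both are perfectly valid.
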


\begin{proof}
Suppose the result is false.  Then there is an $\epsilon>0$ and $B_n$, $I_n$, and $\phi_n$, $\psi_n$ homomorphisms from $A$
to $B_n/I_n$ such that $\|\phi_n(x_j)-\psi_n(x_j)\|<\frac{1}{n}$ for all $j$, $\phi_n$ lifts to $\bar\phi_n:A\to B_n$,
but $\psi_n$ does not lift to any $\bar\psi_n:A\to B_n$ with $\|\bar\phi_n(x_j)-\bar\psi_n(x_j)\|<\epsilon$ for all $j$.
Let $B=\prod_n B_n$, $I=\prod_n I_n$,
$J_n$ the ideal of elements of $B$ vanishing after the $n$'th term, $J=[\cup J_n]^-=\oplus_n B_n$.
Let $\bar\phi:A\to B$ be defined by
$$\bar\phi(x)=(\bar\phi_1(x),\bar\phi_2(x),\dots)$$
and let $\phi=\pi_J\circ\bar\phi:A\to B/J$.
There is also a homomorphism $\tilde\phi$ from $A$ to $B/I\cong\prod_n(B_n/I_n)$ defined by
$$\tilde\phi(x)=(\psi_1(x),\psi_2(x),\dots)\ .$$
We have $\lim_{n\to\infty}\|\phi_n(x)-\psi_n(x)\|=0$
for all $x$ in a dense *-subalgebra of $A$ (the *-subalgebra generated by $\G$) and, since the $\phi_n$ and $\psi_n$
are uniformly bounded (all have norm 1), we have $\lim_{n\to\infty}\|\phi_n(x)-\psi_n(x)\|=0$ for all $x\in A$.  So we have that $\phi$
and $\tilde\phi$ agree mod $I+J$.
Thus by \ref{SpecLift}, for some $n$, there is a lift $\psi$ of $\phi$
to $B/J_n$ agreeing with $\tilde\phi$ mod $I+J_n$.  This lift defines $\bar\psi_k:A\to B_k$ for each $k>n$
lifting $\psi_k$.  Fix $m$ such that $\|x_j\|<\frac{\epsilon}{2}$ for all $j>m$.  Since $\psi=\phi$ mod $J$, we have
$\lim_{k\to\infty}\|\bar\phi_k(x_j)-\bar\psi_k(x_j)\|=0$ for all $j$.  Thus there is a $k$ such that
$$\|\bar\phi_k(x_j)-\bar\psi_k(x_j)\|<\epsilon$$
for $1\leq j\leq m$.  If $j>m$, we have
$$\|\bar\phi_k(x_j)-\bar\psi_k(x_j)\|\leq\|\bar\phi_k(x_j)\|+\|\bar\psi_k(x_j)\|\leq2\|x_j\|<\epsilon\ .$$
Thus $\|\bar\phi_k(x_j)-\bar\psi_k(x_j)\|<\epsilon$ for all $j$, a contradiction.

The diagram at the end of \ref{SpecLift} summarizes the construction.
\end{proof}

As in the commutative case (cf.\ \cite[IV.1.1]{HuRetracts}, \cite[4.1.1]{vanMillInfinite}), we obtain that sufficiently close homomorphisms from a semiprojective C*-algebra
are homotopic (see \cite[3.6]{BlackadarShape} for a slightly weaker version of this result with a more elementary proof):

\begin{Cor}\label{CloseHomSmall}
Let $A$ be a semiprojective C*-algebra generated by a finite or countable set $\G=\{x_1,x_2,\dots\}$ with $\lim_{j\to\infty}\|x_j\|=0$ if
$\G$ is infinite.   Then for any $\epsilon>0$ there is a $\delta>0$
such that, whenever $B$ is a C*-algebra, $\phi_0$ and $\phi_1$ *-homomorphisms from $A$ to $B$
with $\|\phi_0(x_j)-\phi_1(x_j)\|<\delta$ for all $j$, then there is a point-norm continuous path $(\phi_t)$ ($0\leq t\leq 1$) of *-homomorphisms
from $A$ to $B$ connecting $\phi_0$ and $\phi_1$ with $\|\phi_t(x_j)-\phi_0(x_j)\|<\epsilon$
for all $j$ for any $t\in[0,1]$.  (The $\delta$ depends on $\epsilon$, $A$, and the set $\G$ of generators, but not on the $B$, $\phi_0$, $\phi_1$.)
\end{Cor}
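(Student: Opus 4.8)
The plan is to derive this from \ref{CloseExtend} by the usual mapping-cylinder trick: I take the auxiliary C*-algebra to be the cylinder $C([0,1],B)$ over $B$ and the auxiliary ideal to be the functions vanishing at both endpoints, so that the ``two points close together in the quotient'' become the two endpoints $\phi_0,\phi_1$ of the sought path.

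First I would fix $\epsilon>0$ and let $\delta>0$ be the constant produced by \ref{CloseExtend} for this $\epsilon$, this $A$, and this generating set $\G$ (the hypotheses on $A$ and $\G$ here are exactly those of \ref{CloseExtend}, so it applies), and I claim this same $\delta$ works for the corollary. Given $\phi_0,\phi_1:A\to B$ with $\|\phi_0(x_j)-\phi_1(x_j)\|<\delta$ for all $j$, set $C=C([0,1],B)$ and let $I$ be the closed ideal of $C$ consisting of functions $f$ with $f(0)=f(1)=0$. Evaluation at the two endpoints identifies $C/I$ with $B\oplus B$: the map $f\mapsto(f(0),f(1))$ is surjective (given $(b_0,b_1)$, use $t\mapsto(1-t)b_0+tb_1$) with kernel exactly $I$.

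Next I would feed \ref{CloseExtend} the pair of *-homomorphisms $\phi,\psi:A\to C/I\cong B\oplus B$ given by $\phi(a)=(\phi_0(a),\phi_0(a))$ and $\psi(a)=(\phi_0(a),\phi_1(a))$. The map $\phi$ lifts: the constant path $\bar\phi(a):=$ (the function identically equal to $\phi_0(a)$) is a *-homomorphism $A\to C$ with $\pi_I\circ\bar\phi=\phi$. The two maps are close: since the norm on $B\oplus B$ is the maximum of the coordinate norms, $\|\phi(x_j)-\psi(x_j)\|=\|\phi_0(x_j)-\phi_1(x_j)\|<\delta$ for all $j$. Hence \ref{CloseExtend} yields a *-homomorphism $\bar\psi:A\to C$ with $\pi_I\circ\bar\psi=\psi$ and $\|\bar\psi(x_j)-\bar\phi(x_j)\|<\epsilon$ for all $j$.

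Finally I would read off the path: for $t\in[0,1]$ put $\phi_t:=\mathrm{ev}_t\circ\bar\psi:A\to B$, so $\phi_t(a)=\bar\psi(a)(t)$. Each $\phi_t$ is a *-homomorphism, the map $t\mapsto\phi_t(a)$ is norm-continuous for every $a$ because $\bar\psi(a)\in C([0,1],B)$, and since $\pi_I\circ\bar\psi=\psi$ the endpoint values of the path are the given $\phi_0$ and $\phi_1$. For the estimate, $\|\phi_t(x_j)-\phi_0(x_j)\|=\|\bar\psi(x_j)(t)-\bar\phi(x_j)(t)\|\leq\|\bar\psi(x_j)-\bar\phi(x_j)\|<\epsilon$ for all $j$ and all $t$. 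I do not expect a genuine obstacle here: the only point needing care is that the $\delta$ depends only on $\epsilon$, $A$, and $\G$, but this is automatic because the $\delta$ of \ref{CloseExtend} is uniform in the target C*-algebra and ideal and we have merely applied it to the cylinder over an arbitrary $B$. All the real work is already buried in \ref{CloseExtend}, and hence in \ref{SpecLift} and the definition of semiprojectivity.
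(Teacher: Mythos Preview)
Your proof is correct and is essentially identical to the paper's own argument: both apply \ref{CloseExtend} with the cylinder $C([0,1],B)$ as the target, the ideal $C_0((0,1),B)$ of functions vanishing at the endpoints, and the pair $\phi=(\phi_0,\phi_0)$, $\psi=(\phi_0,\phi_1)$, lifting $\phi$ by the constant path. Your write-up merely spells out in a bit more detail the final step of reading off the homotopy and the $\epsilon$-estimate.
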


In fact, for any $\epsilon>0$, a $\delta$ that works for \ref{CloseExtend} also works for \ref{CloseHomSmall}.

\begin{proof}
Choose $\delta>0$ as in \ref{CloseExtend} for the given $\epsilon$.  Let $\tilde B=C([0,1],B)$, $I=C_0((0,1),B)$ the ideal of
elements of $\tilde B$ vanishing at 0 and 1.  Then $\tilde B /I\cong B\oplus B$.  Define $\phi,\psi:A\to \tilde B /I$ by
$\phi(x)=(\phi_0(x),\phi_0(x))$ and $\psi(x)=(\phi_0(x),\phi_1(x))$.  Then $\phi$ and $\psi$ satisfy the hypotheses of \ref{CloseExtend},
and $\phi$ lifts to $\tilde B$ as a constant function, so $\psi$ also lifts, and the lift satisfies the conclusion of \ref{CloseExtend}.
\end{proof}

\begin{Cor}\label{CloseHom}
Let $A$ be a semiprojective C*-algebra generated by a finite or countable set $\G=\{x_1,x_2,\dots\}$ with $\lim_{j\to\infty}\|x_j\|=0$ if
$\G$ is infinite.   Then there is a $\delta>0$
such that, whenever $B$ is a C*-algebra, $\phi_0$ and $\phi_1$ *-homomorphisms from $A$ to $B$
with $\|\phi_0(x_j)-\phi_1(x_j)\|<\delta$ for all $j$, then $\phi_0$ and $\phi_1$ are homotopic.
(The $\delta$ depends on $A$ and the set $\G$ of generators, but not on the $B$, $\phi_0$, $\phi_1$.)
\end{Cor}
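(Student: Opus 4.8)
The plan is to obtain this as an immediate consequence of Corollary \ref{CloseHomSmall}. That result already produces, for any prescribed $\epsilon>0$, a $\delta>0$ (depending only on $\epsilon$, $A$, and $\G$) such that $\delta$-closeness of $\phi_0$ and $\phi_1$ on the generators yields a point-norm continuous path $(\phi_t)$ of *-homomorphisms $A\to B$ joining $\phi_0$ to $\phi_1$, with the additional bound $\|\phi_t(x_j)-\phi_0(x_j)\|<\epsilon$ for all $j$ and $t$. Since a point-norm continuous path of *-homomorphisms is by definition a homotopy, the statement follows once we fix any single value of $\epsilon$ and discard the (here irrelevant) quantitative conclusion.

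Concretely, first I would invoke Corollary \ref{CloseHomSmall} with $\epsilon=1$ — the choice of $\epsilon$ is immaterial, since \ref{CloseHom} asserts only the \emph{existence} of some $\delta$ — obtaining a $\delta>0$ depending only on $A$ and $\G$. Then, given any C*-algebra $B$ and *-homomorphisms $\phi_0,\phi_1:A\to B$ with $\|\phi_0(x_j)-\phi_1(x_j)\|<\delta$ for all $j$, Corollary \ref{CloseHomSmall} supplies the path $(\phi_t)$, which witnesses that $\phi_0$ and $\phi_1$ are homotopic. No estimate needs to be propagated beyond what \ref{CloseHomSmall} already gives.

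There is essentially no obstacle: the entire content of the statement has been absorbed into \ref{CloseHomSmall} (hence ultimately into \ref{CloseExtend} and \ref{SpecLift}), and \ref{CloseHom} is strictly weaker than \ref{CloseHomSmall}, included only for convenient reference. The one point worth flagging is purely expository — that passing from the quantitative version to the qualitative one is automatic — so the proof is a single sentence. (If one wished to avoid citing \ref{CloseHomSmall} one could instead apply \ref{CloseExtend} directly to $\tilde B=C([0,1],B)$ with $I=C_0((0,1),B)$, exactly as in the proof of \ref{CloseHomSmall}, but nothing is gained by doing so.)
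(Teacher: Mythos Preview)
Your proposal is correct and matches the paper's proof essentially verbatim: the paper also fixes $\epsilon=1$ and applies \ref{CloseHomSmall}. There is nothing to add.
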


\begin{proof}
Fix any $\epsilon>0$, say $\epsilon=1$, and apply \ref{CloseHomSmall}.
\end{proof}

In the proofs of the commutative versions of these results, a metric is fixed on the space and the $\delta$ depends on $\epsilon$
and the choice of metric.  Fixing a set of generators can be regarded as an analog of fixing a metric in our setting.

\section{The Homotopy Lifting Theorem}

We can now state and prove the C*-analog of the Borsuk Homotopy Extension Theorem.  When arrows are turned around
for the C*-algebra setting, extension problems become lifting problems.

\begin{Thm} {\sc [Homotopy Lifting Theorem]}\label{HomExt}
Let $A$ be a semiprojective C*-algebra, $B$ a C*-algebra, $I$ a closed ideal of $B$, $(\phi_t)$ ($0\leq t\leq1$) a point-norm continuous path of *-homomorphisms
from $A$ to $B/I$.  Suppose $\phi_0$ lifts to a *-homomorphism $\bar\phi_0:A\to B$, i.e.\ $\pi_I\circ\bar\phi_0=\phi_0$.
Then there is a point-norm continuous path $(\bar\phi_t)$ ($0\leq t\leq1$) of *-homomorphisms from $A$ to $B$ beginning at $\bar\phi_0$
such that $\bar\phi_t$ is a lifting of $\phi_t$ for each $t$, i.e.\ the entire homotopy lifts.  In particular,
$\phi_1$ lifts to a *-homomorphism from $A$ to $B$.
\end{Thm}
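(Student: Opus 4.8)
The plan is to reduce the Homotopy Lifting Theorem to the Specified Quotient Partial Lifting Theorem (\ref{SpecLift}) and the close-lifting result (\ref{CloseExtend}), by lifting the path over a finite partition of $[0,1]$ and then splicing the pieces together into a continuous path. First I would choose, using compactness of $[0,1]$ together with \ref{CloseExtend}, a partition $0=t_0<t_1<\dots<t_N=1$ fine enough that for each consecutive pair the homomorphisms $\phi_{t_{k-1}}$ and $\phi_{t_k}$ are so close (in point-norm, on a suitable countable generating set of $A$ with norms tending to $0$) that closeness hypotheses can be invoked; here one should be careful that semiprojective C*-algebras are always separable, so such a generating set exists.

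The core of the argument is an inductive construction of liftings $\bar\phi_{t_k}:A\to B$ of $\phi_{t_k}$, starting from the given $\bar\phi_0=\bar\phi_{t_0}$. Given $\bar\phi_{t_{k-1}}$, I would apply \ref{CloseExtend} (with the roles $\phi\mapsto\phi_{t_{k-1}}$, $\psi\mapsto\phi_{t_k}$, and the known lift $\bar\phi_{t_{k-1}}$) to obtain a lift $\bar\phi_{t_k}$ of $\phi_{t_k}$ which is close to $\bar\phi_{t_{k-1}}$ in point-norm. But merely having liftings at the partition points is not enough: to get a \emph{continuous path} of liftings I would instead work over the interval $[t_{k-1},t_k]$ directly. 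The cleanest device is to apply \ref{SpecLift} to the C*-algebra $\tilde B=C([t_{k-1},t_k],B)$, with $I$ replaced by the ideal $C([t_{k-1},t_k],I)$ together with the evaluation ideals at the two endpoints, so that a lift of the whole path segment is forced to agree with the already-constructed endpoint liftings $\bar\phi_{t_{k-1}}$ and $\bar\phi_{t_k}$. This requires phrasing the ideals as an increasing sequence with dense union; one takes $J_n\subseteq \tilde B$ to be functions vanishing on a shrinking neighborhood of the endpoints (or uses an approximate-constancy argument), so that $\cup J_n$ has closure the ideal of functions vanishing at the endpoints, and semiprojectivity of $A$ then yields a segment-lift landing in $\tilde B/J_n$, whose restriction to the slightly smaller closed subinterval is a genuine path of liftings.

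The main obstacle I expect is precisely this \emph{matching of liftings across partition points}: a naive segment-by-segment construction produces liftings over each $[t_{k-1},t_k]$ whose values at $t_k$ need not coincide, and concatenation fails. The whole point of having proved \ref{SpecLift} with a \emph{specified quotient} is to overcome exactly this — it lets us demand that a partial lift restrict correctly on a prescribed quotient, here the endpoint-evaluation quotient of $C([t_{k-1},t_k],B)$. A secondary technical point is ensuring the resulting path is point-norm continuous at the junctions $t_k$; this is automatic once each segment-lift is continuous on a \emph{closed} subinterval containing $t_k$ and the two segments agree at $t_k$ by construction. Finally, I would note the last sentence, that $\phi_1$ lifts, is immediate by evaluating the constructed path at $t=1$.
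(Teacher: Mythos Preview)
Your overall plan---choose a fine partition using compactness and the $\delta$ from \ref{CloseExtend}, lift segment by segment, then concatenate---matches the paper. But the segment-lifting step has a genuine gap. You propose invoking \ref{SpecLift} on $\tilde B=C([t_{k-1},t_k],B)$ with $J_n$ the functions vanishing on a shrinking neighborhood $K_n$ of the endpoints, so that $J=[\cup J_n]^-$ is the ideal of functions vanishing at both endpoints. But \ref{SpecLift} only produces a lift to $\tilde B/J_n\cong C(K_n,B)$, i.e.\ a lift defined \emph{only on the small neighborhood $K_n$ of the endpoints}, not on the interior of the segment. Your phrase ``restriction to the slightly smaller closed subinterval'' suggests you expect the opposite, but that is not what the quotient $\tilde B/J_n$ is. \ref{SpecLift} is a \emph{partial} lifting theorem---it lifts one step up an increasing chain of ideals---and cannot by itself produce a full lift to $\tilde B$.

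The paper's argument is simpler and sidesteps this. It applies \ref{CloseExtend} (not \ref{SpecLift}) directly to $\tilde B=C([t_{k-1},t_k],B)$ with the single ideal
\[
J=\{f:[t_{k-1},t_k]\to I : f(t_{k-1})=0\},
\]
so that $\tilde B/J\cong C([t_{k-1},t_k],B/I)\oplus_{\pi_I}B$. The constant path at $\phi_{t_{k-1}}$ (paired with $\bar\phi_{t_{k-1}}$) and the given path $(\phi_t)_{t\in[t_{k-1},t_k]}$ (also paired with $\bar\phi_{t_{k-1}}$) define homomorphisms into $\tilde B/J$ that are $\delta$-close on generators; the former lifts to $\tilde B$ as the constant function $\bar\phi_{t_{k-1}}$, so by \ref{CloseExtend} the latter lifts too, yielding a continuous path of lifts over the whole segment starting at $\bar\phi_{t_{k-1}}$. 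Only the \emph{left} endpoint is constrained: the right-endpoint value $\bar\phi_{t_k}$ is whatever the lift hands you, and becomes the starting data for the next segment. There is no need to pre-lift at all partition points or to match two endpoints simultaneously, which eliminates the ``matching across partition points'' obstacle you were worried about.
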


\begin{proof}
Let $\G=\{x_1,x_2,\dots\}$ be a countable set of generators for $A$, with $\|x_j\|\to0$ (by definition, a semiprojective
C*-algebra is separable, hence countably generated).  Fix $\epsilon>0$, say $\epsilon=1$, and fix $\delta>0$
satisfying the conclusion of \ref{CloseExtend} for $\epsilon$, $A$, $\G$.  Choose a finite partition
$0=t_0<t_1<t_2<\cdots<t_m=1$ such that $\|\phi_s(x_j)-\phi_t(x_j)\|<\delta$ for all $j$ whenever $s,t\in[t_{i-1},t_i]$ for any $i$.
There is such a partition since one only needs to consider finitely many $x_j$, the condition being automatic for
any $x_j$ with $\|x_j\|<\frac{\delta}{2}$; cf.\ the last part of the proof of \ref{CloseExtend}.

Begin with $[0,t_1]$.  Let $\tilde B=C([0,t_1],B)$, and $J$ the ideal of $\tilde B$ consisting of functions $f:[0,t_1]\to I$
with $f(0)=0$.  Then
$$\tilde B /J\cong C([0,t_1],B/I)\oplus_{\pi_I}B=\{(f,b)\in C([0,t_1],B/I)\oplus B:f(0)=\pi_I(b)\}\ .$$
Define homomorphisms $\phi,\psi:A\to \tilde B/J$ by setting $\phi(x)=(f_x,\bar\phi_0(x))$, where $f_x(t)=\phi_0(x)$ for all $t$,
and $\psi(x)=(g_x,\bar\phi_0(x))$, where $g_x(t)=\phi_t(x)$ for all $t$.  We then have
$$\|\phi(x_j)-\psi(x_j)\|<\delta$$
for all $j$.  Since $\phi$ lifts to a *-homomorphism from $A$ to $\tilde B$ (e.g.\ by the constant function $\bar\phi_0$),
$\psi$ also lifts, defining a continuous path of lifts $(\bar\phi_t)$ of the $\phi_t$ for $0\leq t\leq t_1$.

Now repeat the process on $[t_1,t_2]$, using the lift $\bar\phi_{t_1}$ as the starting point, and continue through
all the intervals.  After a finite number of steps the entire homotopy is lifted.
\end{proof}

\begin{Cor}
Let $A$ be a semiprojective C*-algebra, $B$ a C*-algebra, $I$ a closed ideal of $B$, $\phi$ a *-homomorphism from
$A$ to $B/I$.  If $\phi$ is homotopic to a *-homomorphism from $A$ to $B/I$ which lifts to $B$, then $\phi$ lifts to $B$.
\end{Cor}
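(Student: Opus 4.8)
The plan is to read this off directly from the Homotopy Lifting Theorem (\ref{HomExt}) after reversing the direction of the homotopy. By hypothesis there is a point-norm continuous path $(\rho_t)$ ($0\leq t\leq1$) of *-homomorphisms from $A$ to $B/I$ with $\rho_0=\phi$ and $\rho_1=\psi$, where $\psi$ admits a lift $\bar\psi:A\to B$, i.e.\ $\pi_I\circ\bar\psi=\psi$. The only maneuver needed is to reparametrize: set $\phi_t=\rho_{1-t}$, so that $(\phi_t)$ is again a point-norm continuous path, with $\phi_0=\rho_1=\psi$ and $\phi_1=\rho_0=\phi$.

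Now $\phi_0=\psi$ lifts to $B$ via $\bar\psi$, so the hypotheses of \ref{HomExt} are met for the path $(\phi_t)$ with chosen lift $\bar\phi_0=\bar\psi$ of $\phi_0$. Applying \ref{HomExt}, there is a point-norm continuous path $(\bar\phi_t)$ of *-homomorphisms from $A$ to $B$ with $\bar\phi_0=\bar\psi$ and $\pi_I\circ\bar\phi_t=\phi_t$ for every $t$. In particular $\bar\phi_1:A\to B$ satisfies $\pi_I\circ\bar\phi_1=\phi_1=\phi$, so $\bar\phi_1$ is the desired lift of $\phi$.

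There is essentially no obstacle here beyond noting that reversing a point-norm continuous path yields a point-norm continuous path, which is immediate; the whole content is packaged into \ref{HomExt}. (One could equally phrase the corollary as the statement that liftability is a homotopy-invariant property of *-homomorphisms out of a semiprojective C*-algebra, which is exactly what the Homotopy Lifting Theorem delivers.)
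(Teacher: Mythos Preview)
Your argument is correct and is exactly the intended one: the corollary is an immediate consequence of Theorem~\ref{HomExt}, obtained by reparametrizing the homotopy so that the liftable endpoint sits at $t=0$. The paper does not give a separate proof of the corollary (the proof block following it is actually the proof of Theorem~\ref{HomExt} itself), so your derivation matches the paper's implicit reasoning.
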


This corollary gives an arguably simpler proof than the one in \cite{ThielInductive} that a contractible semiprojective C*-algebra
is projective, since the zero homomorphism always lifts from any quotient.  (The result in  
\cite{ThielInductive} is slightly more general).

\section{$\ell$-Open and $\ell$-Closed C*-Algebras}

In this section, all C*-algebras will be assumed {\em separable}.  We will use $\cC$ to denote a category of separable
C*-algebras and *-homomorphisms, e.g.\ the category of all separable C*-algebras and *-homomorphisms, the category of
separable unital C*-algebras and unital *-homomorphisms, or the category of separable unital commutative C*-algebras and unital
*-homomorphisms.

If $A$ and $B$ are C*-algebras, denote by $\Hom(A,B)$ the set of *-homomorphisms from $A$ to $B$, endowed with the
point-norm topology.  $\Hom(A,B)$ is separable and metrizable.  If $A$ and $B$ are unital,
let $\Hom_1(A,B)$ be the set of unital *-homomorphisms from $A$ to $B$.  $\Hom_1(A,B)$ is a clopen subset of $\Hom(A,B)$
(since a projection close to the identity in a C*-algebra is equal to the identity).

If $A=C(X)$ and $B=C(Y)$,
then $\Hom_1(A,B)$ is naturally homeomorphic to $X^Y$, the set of continuous functions from $Y$ to $X$, endowed
with the topology of uniform convergence (with respect to any fixed metric
on $X$, or with respect to the unique uniform structure on $X$ compatible with its topology).

More generally, if $\cC$
is a category of C*-algebras, denote by $\Hom_{\cC}(A,B)$ the morphisms in $\cC$, with the point-norm
topology (i.e.\ the subspace topology from $\Hom(A,B)$).

\smallskip

If $\cC$ is a category of C*-algebras, $A,B\in\cC$, and $I$ is a closed ideal of $B$ compatible with $\cC$ (i.e.\
$B/I\in\cC$ and the quotient map $\pi_I$ is a morphism in $\cC$; this is automatic in the three categories above), denote by
$\Hom_{\cC}(A,B,I)$ the set of $\cC$-morphisms from $A$ to $B/I$ which lift to $\cC$-morphisms from $A$ to $B$.
$\Hom_{\cC}(A,B,I)$ is a subset of $\Hom_{\cC}(A,B/I)$.

If $\cC$ is the category of separable unital commutative C*-algebras and $A=C(X)$, $B=C(Y)$, with $X$, $Y$ compact
metrizable spaces, $I$ corresponds to a closed subset $Z$ of $Y$ and $B/I\cong C(Z)$; then $\Hom_{\cC}(A,B,I)$ is
the subset $\XYZ$ of $X^Z$ consisting of maps (continuous functions) from $Z$ to $X$ which extend to maps from $Y$ to $X$.
See the companion article \cite{BlackadarExtending} for a discussion of this case.

Examples show that $\Hom_{\cC}(A,B,I)$ is neither open nor closed in $\Hom_{\cC}(A,B/I)$ in general (see \cite{BlackadarExtending} for commutative examples).
We seek conditions on $A$ insuring that $\Hom_{\cC}(A,B,I)$ is always open or closed in $\Hom_{\cC}(A,B/I)$ for any $B$ and $I$.

\begin{Def}
Let $\cC$ be a category, and $A\in\cC$.
\begin{enumerate}
\item[(i)]  $A$ is {\em $\ell$-open} (in $\cC$) if, for every pair $(B,I)$ in $\cC$, the set $\Hom_{\cC}(A,B,I)$ is open in $\Hom_{\cC}(A,B/I)$.
\item[(ii)]  $A$ is {\em $\ell$-closed} (in $\cC$) if, for every pair $(B,I)$ in $\cC$, the set $\Hom_{\cC}(A,B,I)$ is closed in $\Hom_{\cC}(A,B/I)$.
\end{enumerate}
If $\cC$ is the category of all separable C*-algebras, we just say $A$ is $\ell$-open [$\ell$-closed].
\end{Def}

If $\cC$ is the category of separable unital commutative C*-algebras and $A=C(X)$, then $A$ is $\ell$-open [$\ell$-closed] in $\cC$
if and only if $X$ is $e$-open [$e$-closed] in the sense of \cite{BlackadarExtending}.  (The $\ell$ and
$e$ stand for {\em liftable} and {\em extendible} respectively, the dual notions in the algebra and topology
contexts.)

\smallskip

The next result is an immediate corollary of \ref{CloseExtend}:

\begin{Cor}
Every semiprojective C*-algebra is both $\ell$-open and $\ell$-closed.
\end{Cor}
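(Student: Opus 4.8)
The statement to prove is that every semiprojective C*-algebra $A$ is both $\ell$-open and $\ell$-closed, and the excerpt announces this follows immediately from \ref{CloseExtend}. So the plan is to unwind the definitions of $\ell$-open and $\ell$-closed and feed the point-norm control in \ref{CloseExtend} directly. First I would fix a countable generating set $\G=\{x_1,x_2,\dots\}$ for $A$ with $\|x_j\|\to 0$; such a set exists because a semiprojective C*-algebra is separable, hence countably generated, and one can rescale generators to have norms tending to $0$ (or take $\G$ finite). This puts us in the exact hypothesis setting of \ref{CloseExtend}.

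\textbf{$\ell$-open.}
Fix a pair $(B,I)$ in $\cC$ and a homomorphism $\phi\in\Hom_{\cC}(A,B,I)$, so $\phi$ lifts to some $\bar\phi:A\to B$. Apply \ref{CloseExtend} with, say, $\epsilon=1$, to obtain $\delta>0$ depending only on $A$ and $\G$. Then the point-norm basic neighborhood of $\phi$ in $\Hom_{\cC}(A,B/I)$ determined by the finitely-many-relevant generators $x_1,\dots,x_m$ (those with $\|x_j\|\geq\delta/2$, say) and radius $\delta$ is entirely contained in $\Hom_{\cC}(A,B,I)$: any $\psi$ in that neighborhood satisfies $\|\phi(x_j)-\psi(x_j)\|<\delta$ for all $j$ (automatically for $j>m$ since both sides have norm at most $\|x_j\|<\delta/2$, hence difference $<\delta$), so by \ref{CloseExtend} $\psi$ lifts to $B$, i.e.\ $\psi\in\Hom_{\cC}(A,B,I)$. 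Hence $\Hom_{\cC}(A,B,I)$ is open.

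\textbf{$\ell$-closed.}
Again fix $(B,I)$ and suppose $\psi\in\Hom_{\cC}(A,B/I)$ lies in the point-norm closure of $\Hom_{\cC}(A,B,I)$. With the same $\delta>0$ as above, there is some $\phi\in\Hom_{\cC}(A,B,I)$ with $\|\phi(x_j)-\psi(x_j)\|<\delta$ for all relevant $j$ (and hence for all $j$ by the norm-decay argument). Since $\phi$ lifts to $B$, \ref{CloseExtend} applies and yields a lift of $\psi$ to $B$, so $\psi\in\Hom_{\cC}(A,B,I)$. Thus $\Hom_{\cC}(A,B,I)$ is closed.

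\textbf{Where the work is.}
There is essentially no obstacle once \ref{CloseExtend} is in hand: the only points needing care are the reduction to finitely many generators (handled exactly as in the last paragraph of the proof of \ref{CloseExtend}, via $\|x_j\|\to 0$) and the observation that the $\delta$ produced by \ref{CloseExtend} is uniform in $(B,I)$, which is precisely what makes both the openness and the closedness arguments go through for \emph{every} pair $(B,I)$ simultaneously. One should also note the statement is for the category of all separable C*-algebras, so no compatibility subtleties for $I$ arise; for a general $\cC$ one would additionally need $\Hom_{\cC}$ to be given the subspace topology and $B/I\in\cC$, but the same proof works verbatim since \ref{CloseExtend} produces genuine *-homomorphisms.
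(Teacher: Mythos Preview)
Your proposal is correct and follows the same approach as the paper. The paper's proof is more terse---it simply records the key observation that, for generators $\G=\{x_j\}$ with $\|x_j\|\to 0$, point-norm convergence $\phi_n\to\phi$ is equivalent to $\sup_j\|\phi_n(x_j)-\phi(x_j)\|\to 0$---but your explicit unpacking of this into a basic open neighborhood determined by the finitely many $x_j$ with $\|x_j\|\geq\delta/2$ is exactly the content of that observation, and your application of \ref{CloseExtend} in both directions is the intended argument.
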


\begin{proof}
One only needs to observe that if $\G=\{x_1,x_2,\dots\}$ is a set of generators for $A$ with $\|x_j\|\to0$, and $\phi_n$, $\phi$
*-homomorphisms from $A$ to a C*-algebra $B$, then $\phi_n\to\phi$ in the point-norm topology if and only if, for every $\epsilon>0$,
there is an $n$ such that $\|\phi_k(x_j)-\phi(x_j)\|<\epsilon$ for all $j$, for all $k>n$. Apply \ref{CloseExtend}.
\end{proof}

If $\cC$ is the category of separable unital commutative C*-algebras and $A=C(X)$, then it is shown in \cite{BlackadarExtending}
that $A$ is $\ell$-open in $\cC$ if and only if $X$ is an ANR, at least if $A$ is finitely generated (equivalently, if $X$ is
finite-dimensional).  Recall that $A$ is semiprojective in $\cC$ if and only if $X$ is an ANR \cite{BlackadarShape}.
Thus it is reasonable to conjecture that a C*-algebra is $\ell$-open if and only if it is semiprojective, at least if
it is finitely generated.

Although there is no obvious direct proof that an $\ell$-open C*-algebra is $\ell$-closed,
I do not know an example of a C*-algebra which is $\ell$-open but not $\ell$-closed, and I conjecture that none exist.
There are $\ell$-closed C*-algebras which are not $\ell$-open, as example \ref{FreeProdC} shows.
I do not have a good idea how to characterize $\ell$-closed C*-algebras.

\smallskip

We conclude with some examples of C*-algebras which are not $\ell$-open.

\begin{Ex}\label{FreeProdC}
(A C*-algebra which is $\ell$-closed but not $\ell$-open.)
Let $A$ be the universal C*-algebra generated by a sequence of projections $\{p_1,p_2,\dots\}$, i.e.\ $A$ is the full free product of a
countable number of copies of $\C$.  Then $A$ is not $\ell$-open: let $B=C([0,1])$, $I=C_0((0,1))$.
$B/I\cong\C\oplus\C$.  Define $\phi_n:A\to B/I$ by $\phi_n(p_k)=(0,0)$ if $k\leq n$, $\phi_n(p_k)=(0,1)$ if $k>n$.
Then $\phi_n$ converges point-norm to the zero homomorphism from $A$ to $B/I$, which obviously lifts to $B$, but no
$\phi_n$ lifts to $B$.  (This shows that $A$ is not semiprojective, which can also be shown by a direct argument.)

$A$ is, however, $\ell$-closed.  Let $B$ be a C*-algebra and $I$ a closed ideal of $B$.  A sequence $(\phi_n)$
of homomorphisms from $A$ to $B/I$ converging point-norm to $\phi$ defines a set $q_k^{(n)}=\phi_n(p_k),q_k=\phi(p_k)$ of projections
in $B/I$ such that $q_k^{(n)}\to q_k$ for all $k$.  If each $\phi_n$ is liftable to $B$, i.e.\ each $q_k^{(n)}$
is liftable to a projection in $B$, it then follows from the semiprojectivity of $\C$ and \ref{CloseExtend}
that each $q_k$ is also liftable to a projection in $B$, i.e.\ $\phi$ is liftable to $B$.

\smallskip

A similar argument shows that a full free product of a sequence of semiprojective C*-algebras is always $\ell$-closed, although
it is not semiprojective unless all but finitely many of the C*-algebras are projective; does the latter condition also characterize when the free product is $\ell$-open?  (This seems likely.)
\end{Ex}

\begin{Ex}\label{FreeGp}
Let $A=C^*(\bbF_\infty)$, the full group C*-algebra of the free group on infinitely many generators, i.e.\ the universal
C*-algebra generated by a sequence of unitaries $\{u_1,u_2,\dots\}$.  It is known that
$A$ is not semiprojective (\cite{BlackadarSemiprojectivity}, \cite[II.8.3.16(vii)]{BlackadarOperator}).  To directly
show $A$ is not $\ell$-open,
let $S$ be the unilateral shift on $\cH=\ell^2$, and $B=\cT$ the C*-subalgebra of $\B(\cH)$ generated by $S$ (the Toeplitz algebra).
Then $B$ contains $I=\cK(\cH)$, and $B/I\cong C(\T)$.  Let $s$ be the image of $S$ in $B/I$.  It is well known that
$s$ has no normal preimage in $B$, in fact no normal preimage in $\B(\cH)$, cf.\ \cite{BrownDFExtensions}; in particular,
it has no unitary preimage in $B$.  Define $\phi_n:A\to B/I$ by setting $\phi_n(u_k)=1$ for $k\leq n$ and $\phi_n(u_k)=s$
for $k>n$.  Then $\phi_n\to\phi$ point-norm, where $\phi(u_k)=1$ for all $k$, and $\phi$ lifts to $B$, but no $\phi_n$ lifts.

An argument similar to the one in \ref{FreeProdC}, using semiprojectivity of $C(\T)$, shows that $A$ is $\ell$-closed in the category of separable unital C*-algebras
and unital *-homomorphisms.  (More generally, a full unital free product of a sequence of unital semiprojective C*-algebras is $\ell$-closed
in the unital category.)  However, it seems like a difficult and delicate question whether $A$ is $\ell$-closed (in the
general category).  For a sequence of homomorphisms from $A$ to $B/I$ defines a convergent sequence $(q_n)$ of projections in $B/I$ (the images
of the identity of $A$) and a sequence of unitaries in $q_n(B/I)q_n$ for each $n$.  The $q_n$ and the unitaries must be
lifted in a compatible way to obtain a lifting of the limit projection and unitaries.  So:

\smallskip

Is $A$ $\ell$-closed?
\end{Ex}

\begin{Ex}
Let $A$ be the universal C*-algebra generated by a normal element $x$ of norm $\leq1$.  Then $A\cong C_0(\bbD\setminus\{(0,0)\})$,
the functions vanishing at $(0,0)$ on the closed unit disk $\bbD$ in $\R^2$.  To show that $A$ is not $\ell$-open,
let $B$, $I$, $S$, $s$ be as in \ref{FreeGp}.  Define $\phi_n:A\to B/I$
by sending $x$ to $\frac{1}{n}s$.  Then $(\phi_n)$ converges in the point-norm
topology to the zero homomorphism, which obviously lifts to $B$.  But no $\phi_n$ lifts.

Showing that $A$ is $\ell$-closed is the same as solving (positively) the following problem:  if $(y_n)$ is a convergent sequence
of normal elements in a quotient $B/I$ with limit $y$, and each $y_n$ lifts to a normal element in $B$, does $y$ also lift
to a normal element?  This appears to be unknown.

If this argument works, it can be slightly modified to show that the unitization $C(\bbD)$ is $\ell$-closed but not $\ell$-open.
In fact, it seems reasonable that if $X$ is any ANR, then $C(X)$ is $\ell$-closed, but it is $\ell$-open if and only if
$C(X)$ is semiprojective, i.e.\ if and only if $dim(X)\leq1$ \cite{SorensenTCharacterization}.
\end{Ex}

\begin{Ex}
Consider the C*-algebras $c$ of convergent sequences of complex numbers and $c_0$ of sequences of complex numbers converging to 0.

\smallskip

To show they are not $\ell$-open, let $B=C([0,1])$ and $I$ the ideal of functions which vanish at $\frac{1}{n}$ for all $n$
(and hence of course also at 0).  Then $B/I\cong c$.  Define $\phi_n:c\to B/I$ by setting $[\phi_n(x)](1/k)=\alpha_k$ if $k>n$,
$[\phi_n(x)](1/k)=\alpha$ if $k\leq n$, $[\phi_n(x)](0)=\alpha$, for $x=(\alpha_1,\alpha_2,\dots)\in c$ with $\alpha_n\to\alpha$.
Then $\phi_n\to\phi$ in the point-norm topology, where $\phi(x)$ is the constant function with value $\alpha$.  Then $\phi$
lifts to $B$, but no $\phi_n$ lifts to $B$ since $B$ has no nontrivial projections.  The restrictions of $\phi_n$, $\phi$
to $c_0$ work the same way.

\smallskip

The question whether $c$ and $c_0$ are $\ell$-closed is much more involved than in the commutative case.  It is relatively
easy to show they are $\ell$-closed in the commutative category (cf.\ \cite{BlackadarExtending});
the commutative case is simpler since
\begin{enumerate}
\item[(i)]  Close projections in a commutative C*-algebra are actually equal.
\item[(ii)]  A product of two commuting projections is a projection.  In particular, if $q$ is a projection in a quotient $B/I$, with $B$ commutative,
and $p_1,p_2$ are two projection lifts to $B$, then $p=p_1p_2$ is also a projection lift to $B$ with $p\leq p_1$, $p\leq p_2$.
Nothing like this is true for general noncommutative $B$.
\end{enumerate}

A *-homomorphism from $c_0$ to a C*-algebra $B$ is effectively the same thing as a specification of a sequence of mutually
orthogonal projections $(p_k)$ in $B$ (some of which may be 0): such a sequence defines a homomorphism $\phi$ by
$$\phi((\alpha_1,\alpha_2,\dots))=\sum_{k=1}^\infty \alpha_kp_k$$
(the sum converges in $B$ since $\alpha_k\to0$).  For a homomorphism from $c$ to $B$, we additionally need a projection $p$
such that $p_k\leq p$ for all $n$: the homomorphism corresponding to such a set of projections is defined by
$$\phi((\alpha_1,\alpha_2,\dots))=\alpha p+\sum_{k=1}^\infty (\alpha_k-\alpha)p_k$$
where $\alpha=\lim_{k\to\infty}\alpha_k$.  If $(\phi_n)$ is a sequence of homomorphisms corresponding to $(p_k^{(n)},p^{(n)})$,
and $\phi$ is another homomorphism corresponding to $(p_k,p)$, then $\phi_n\to\phi$ in the point-norm topology if and only if
$\lim_{n\to\infty}p_k^{(n)}=p_k$ for each $k$ and $\lim_{n\to\infty}p^{(n)}=p$.

Now suppose $B$ is a C*-algebra and $I$ a closed ideal of $B$, and $\phi_n,\phi:c\to B/I$ with $\phi_n\to\phi$.
Let $\phi_n$ correspond to $(q_k^{(n)},q^{(n)})$ and $\phi$ to $(q_k,q)$.  Suppose each $q_k^{(n)}$ lifts to a projection
in $B$.  We need to find projections $(p_k,p)$ in $B$ with the $p_k$ mutually orthogonal, $p_k\leq p$ for all $k$, $\pi_I(p_k)=q_k$ for all $k$,
and $\pi_I(p)=q$.  It seems technically difficult, if not impossible, to show that this can be done.  So:

\smallskip

Are $c$ and $c_0$ $\ell$-closed?




\end{Ex}

\bibliography{sprojhomref}

\begin{thebibliography}{BDF77}

\bibitem[BDF77]{BrownDFExtensions}
L.~G. Brown, R.~G. Douglas, and P.~A. Fillmore.
\newblock Extensions of {$C\sp*$}-algebras and {$K$}-homology.
\newblock {\em Ann. of Math. (2)}, 105(2):265--324, 1977.

\bibitem[Bla85]{BlackadarShape}
Bruce Blackadar.
\newblock Shape theory for {$C\sp \ast$}-algebras.
\newblock {\em Math. Scand.}, 56(2):249--275, 1985.

\bibitem[Bla04]{BlackadarSemiprojectivity}
Bruce Blackadar.
\newblock Semiprojectivity in simple {$C\sp *$}-algebras.
\newblock In {\em Operator algebras and applications}, volume~38 of {\em Adv.
  Stud. Pure Math.}, pages 1--17. Math. Soc. Japan, Tokyo, 2004.

\bibitem[Bla06]{BlackadarOperator}
B.~Blackadar.
\newblock {\em Operator algebras}, volume 122 of {\em Encyclopaedia of
  Mathematical Sciences}.
\newblock Springer-Verlag, Berlin, 2006.
\newblock Theory of $C{\sp{*}}$-algebras and von Neumann algebras, Operator
  Algebras and Non-commutative Geometry, III.

\bibitem[Bla12]{BlackadarExtending}
Bruce Blackadar.
\newblock Extending continuous functions.
\newblock {\em arXiv:1207.6147}, 2012.

\bibitem[Bor36]{BorsukProlongements}
Karol Borsuk.
\newblock Sur les prolongements des transformations continues.
\newblock {\em Fund. Math.}, 28:99--110, 1936.

\bibitem[Bor67]{BorsukRetracts}
Karol Borsuk.
\newblock {\em Theory of retracts}.
\newblock Monografie Matematyczne, Tom 44. Pa\'nstwowe Wydawnictwo Naukowe,
  Warsaw, 1967.

\bibitem[Hu65]{HuRetracts}
Sze-tsen Hu.
\newblock {\em Theory of retracts}.
\newblock Wayne State University Press, Detroit, 1965.

\bibitem[Ped99]{PedersenPullback}
Gert~K. Pedersen.
\newblock Pullback and pushout constructions in {$C\sp *$}-algebra theory.
\newblock {\em J. Funct. Anal.}, 167(2):243--344, 1999.

\bibitem[ST11]{SorensenTCharacterization}
Adam S{\o}rensen and Hannes Thiel.
\newblock A characterization of semiprojectivity for commutative {C}*-algebras.
\newblock {\em arXiv:1101.1856}, 2011.

\bibitem[Thi]{ThielInductive}
Hannes Thiel.
\newblock Inductive limits of projective {$C\sp *$}-algebras.
\newblock {\em to appear.}

\bibitem[vM01]{vanMillInfinite}
Jan van Mill.
\newblock {\em The infinite-dimensional topology of function spaces}, volume~64
  of {\em North-Holland Mathematical Library}.
\newblock North-Holland Publishing Co., Amsterdam, 2001.

\end{thebibliography}
\bibliographystyle{alpha}

\end{document}